\newtheorem{theo}{Theorem}
\newtheorem{defi}{Definition}[section]
\newtheorem{rem}{Remark}[section]
\title{On solvability of a time-fractional doubly critical semilinear equation, and its quantitative approach to the non-existence result on the classical counterpart.}
\author{Mizuki Kojima}
\date{}
\begin{document}
\maketitle
\begin{abstract}
	We study a time-fractional semilinear heat equation
	\[
	\partial^{\alpha}_t u -\Delta u = u^{p},\ \ \mbox{in}\ (0,T)\times\mathbb{R}^N,\ \ u(0)=u_0\ge0
	\]
	with $u_0\in L^{1}(\mathbb{R}^N)$ and $p=1+2/N$. Here $\partial_t^{\alpha}$ denotes the Caputo derivative of order $\alpha \in (0,1)$. Since the space $L^1(\mathbb{R}^N)$ is scale critical with $p=1+2/N$, this type of equation is known as a doubly critical problem. It is known that the usual doubly critical equation $\partial_t u-\Delta u=u^p$ does not have nonnegative global-in-time solutions, while the time-fractional problem does. Moreover, there exists a singular initial data which admits no local-in-time solution, while the time-fractional equation is solvable for any $L^{1}(\mathbb{R}^N)$ initial data. In this paper, we deduce a necessary condition imposed on $u_0$ for the existence of a nonnegative solution. Furthermore, we obtain corollaries that describe the collapse of the local and global solvability for the time-fractional equation as $\alpha \rightarrow 1$.
\end{abstract}

\section{Introduction}

\subsection{Known results for the Fujita equation}
The Fujita equation,
\begin{equation}\label{eq. intro Fujita eq}
	\left\{
	\begin{aligned}
	\partial_t u -\Delta u &=u^p\ \ &&\mbox{in}\ (0,T)\times \mathbb{R}^N,\\
	u(0)&=u_0\ge 0\ \ &&\mbox{on}\ \mathbb{R}^N,
	\end{aligned}
	\right.
\end{equation}
has been extensively studied by many mathematicians starting with the pioneering work \cite{Fujita} which states the following.

\begin{itemize}
	\item If $1<p\le p_f:=1+2/N$, then (\ref{eq. intro Fujita eq}) possesses no nonnegative global-in-time solutions.
	\item If $p_f<p$, then (\ref{eq. intro Fujita eq}) has a global-in-time solution for an appropriate initial data.
\end{itemize}
The Fujita exponent $p_f$ is related to the well-posedness of (\ref{eq. intro Fujita eq}) in some Lebesgue spaces. Due to \cite{BreCaz96, Wei80}, the following holds.
	\begin{itemize}
		\item Assume either $q>q_c:=N(p-1)/2$ and $q\ge 1$, or $q=q_c>1$. Then, for all $u_0\in L^{q}(\mathbb{R}^N)$, the problem (\ref{eq. intro Fujita eq}) has a local-in-time solution.
		\item For each $1\le q<q_c$, there is $u_0\in L^{q}(\mathbb{R}^N)$ such that (\ref{eq. intro Fujita eq}) possesses no nonnegative solutions.
	\end{itemize}
It is well known that the exponent $q_c$ is deduced from the scale-invariant property of (\ref{eq. intro Fujita eq}). \textit{The doubly critical case}, where $q_c=1\ \Leftrightarrow p=p_f$ for $u_0\in L^{1}(\mathbb{R}^N)$ is not mentioned in the above statement, and \cite{BreCaz96} suggested some open problems related to this case. Indeed, for such critical problems, some researchers have revealed the nonexistence of positive solutions for certain nonnegative initial data. See for instance \cite{Takahashi16, FujiIoku18, HisaIshige18, Miya10} and references therein.

In the discussion of some critical problems, the singularity of the initial data plays an essential role. Namely, it is important to formulate some necessary conditions imposed on the initial data for the existence of local-in-time solutions. For instance, \cite{BarasPierre85} revealed that if (\ref{eq. intro Fujita eq}) has a nonnegative local solution, then $u_0\ge 0$ must satisfy
\begin{equation}\label{eq. BarasPierre}
	\int_{B(\rho)} u_0(y)dy \le \gamma \left| \log\rho \right|^{-N/2}
\end{equation}
for sufficiently small $\rho>0$. Here, we denote by $B(z;\rho)\subset \mathbb{R}^N$ a closed ball of radius $\rho>0$ centered at $z\in \mathbb{R}^N$ and $B(\rho):= B(0;\rho)$. For the space-fractional equation
\begin{equation}\label{eq. intro space fractional equation}
	\partial_tu + (-\Delta)^{\theta/2} u = u^{p},\ \ u(0)=u_0\ge 0
\end{equation}
with $0<\theta \le 2$ and $p=1+\theta/N$, \cite{HisaIshige18} showed that existence of a nonnegative solution of (\ref{eq. intro space fractional equation}) on $(0,T)$ implies
\begin{equation}\label{eq. HisaIshige}
		\int_{B(z;\sigma)} u_0(y)dy \le \gamma \left[ \log\left( e+ \frac{T^{1/\theta}}{\sigma} \right) \right]^{-N/\theta}
\end{equation}
holds for all $z\in \mathbb{R}^N$ and $0<\sigma <T^{1/\theta}$. Here $\gamma$ depends on $N$ and $\theta$. For the related topics, see also \cite{HisaIshige19}.

\subsection{Time-fractional heat equation, analogies and differences}

Let $0<\alpha <1$. The Caputo derivative of order $\alpha$ is defined by
\begin{equation}
	(\partial_t^{\alpha}f)(t) := \frac{1}{\Gamma(1-\alpha)} \frac{d}{dt}\int_{0}^{t} (t-\tau)^{-\alpha} \left( f(\tau)-f(0) \right)d\tau.
	\end{equation}
This mathematical tool has been proposed to model the anomalous diffusion, which is different from the usual one of materials based on the Brownian motion.

For the past decades, many mathematical results on the time-fractional differential equations have been revealed. For the general theory, see for instance \cite{Bajl01, EK04, LPJ12, MAH17, Mahm02, WCX12, Podl99}. For the topic of regularity and maximum principle, see \cite{ACV16, Guid19JMAA, Guid19MJM} and \cite{Luch09, Luch10, Luch11}. Nonlinear problems with time-fractional derivatives also have been extensively discussed, for example, time-fractional Navier-Stokes equations \cite{CP15}, nonlinear Schr\"{o}dinger equations \cite{ZSL17}, the viscosity solution method for fully nonlinear parabolic equations \cite{TY17}, and Cauchy problems in which the initial condition has a space nonlocality \cite{ZhJi10}. Among these results, we mainly focus on the following time-fractional semilinear equation
\begin{equation}\label{eq. intro alpha Fujita eq}
	\left\{
	\begin{aligned}
	\partial_t^{\alpha} u -\Delta u &=u^p\ \ &&\mbox{in}\ (0,T)\times \mathbb{R}^N,\\
	u(0)&=u_0\ \ &&\mbox{on}\ \mathbb{R}^N,
	\end{aligned}
	\right.
\end{equation}
for which not only the analogy of (\ref{eq. intro Fujita eq}), but also interesting different aspects have been revealed. In \cite{ZhanSun15}, it is studied that (\ref{eq. intro alpha Fujita eq}) possesses no nonnegative global-in-time solutions if $1<p<p_f$, while has a global one if $p>p_f$. For $p=p_f$ however, (\ref{eq. intro alpha Fujita eq}) is solvable globally-in-time for some appropriate initial data. Moreover, \cite{ZLS19, MMS21} showed that (\ref{eq. intro alpha Fujita eq}) is locally-in-time solvable for any $u_0\in L^{1}(\mathbb{R}^N)$ in the doubly critical case, while (\ref{eq. intro Fujita eq}) cannot be solved in this case. Formally speaking, in the critical situation, the solvability of (\ref{eq. intro alpha Fujita eq}) is easier to obtain than (\ref{eq. intro Fujita eq}).

To observe this point, let us introduce analogies and differences between the usual heat equation and the time-fractional one. For the linear heat equation $\partial_t u - \Delta u=f$ with $u(0)=u_0$, the solution is represented by the 
Duhamel formula:
\[
u(t) = e^{t\Delta}u_0 + \int_{0}^{t}e^{(t-\tau)\Delta} f(\tau)d\tau
\]
where $e^{t\Delta}v:=G(t,\cdot)\ast v$ and
\[
G(t,x):= \frac{1}{\left( 4\pi t \right)^{N/2}} \exp\left( -\frac{|x|^{2}}{4t} \right).
\]
On the other hand, the solution of the time-fractional linear equation $\partial_t^{\alpha}u-\Delta u=f$ with $u(0)=u_0$ is written as
\begin{equation}\label{eq. mild sol for linear eq}
u(t) = P_{\alpha}(t)u_0 + \alpha \int_{0}^{t} (t-\tau)^{\alpha-1} S_{\alpha}(t-\tau) f(\tau)d\tau
\end{equation}
where
\[
P_{\alpha}(t):= \int_{0}^{\infty} h_{\alpha}(\theta)e^{t^{\alpha}\theta \Delta}d\theta,\ \ S_{\alpha}(t):= \int_{0}^{\infty} \theta h_{\alpha}(\theta)e^{t^{\alpha}\theta \Delta}d\theta
\]
and $h_{\alpha}$ is a probability density function $h_{\alpha}$ on $(0,\infty)$ defined by
\[
h_{\alpha}:= \frac{1}{\alpha} \theta^{-1-1/\alpha} \psi_{\alpha}\left( \theta^{-1/\alpha} \right)
\]
for
\[
\psi_{\alpha}(\theta):= \frac{1}{\pi} \sum_{k=1}^{\infty} (-1)^{k-1} \theta^{-k\alpha -1} \frac{\Gamma(k\alpha+1)}{k!} \sin(k\pi \alpha).
\]
It is known that $h_{\alpha}$ fulfills $h_{\alpha}\ge 0$ and
\begin{equation}\label{eq. int halpha}
\int_{0}^{\infty} \theta^{\nu} h_{\alpha}(\theta)d\theta = \frac{\Gamma(1+\nu)}{\Gamma(1+\alpha\nu)}\ \ \mbox{for}\ \nu>-1,
\end{equation}
\begin{equation}\label{eq. int halpha&exp}
\int_{0}^{\infty} h_{\alpha}(\theta) e^{z \theta} d\theta =E_{\alpha,1}(z),\ \ \int_{0}^{\infty} \alpha \theta h_{\alpha}(\theta) e^{z\theta}=E_{\alpha,\alpha}(z)\ \ \mbox{for}\ z\in \mathbb{C}.
\end{equation}
Here, we define the Mittag-Leffler functions as follows:
\[
E_{\alpha,\beta}(z):= \sum_{k=0}^{\infty} \frac{z^{k}}{\Gamma(\alpha k +\beta)}.
\]
For details, see \cite{Emi00, Mahm02, MAH17, WCX12, ZhanSun15, ZhJi10}. It should be noted that the operators $P_{\alpha}(t)$ and $S_{\alpha}(t)$ no longer have the semigroup property owing to the weighted function.

The $L^p$-$L^q$ estimate is an important property of the heat semigroup. It states that
\begin{equation}\label{eq. intro LpLq est}
	\| e^{t\Delta} u_0\|_{L^q} \le C t^{-\frac{N}{2}\left( \frac{1}{p}-\frac{1}{q} \right)} \|u_0\|_{L^p}
\end{equation}
for $q\ge p$. Using this for $P_{\alpha}(t)$ and $S_{\alpha}(t)$ yields the analogous estimate
\begin{equation}\label{eq. intro LpLq est for alpha}
\begin{aligned}
\| P_{\alpha}(t) u_0\|_{L^{q}}&\le C t^{-\frac{\alpha N}{2}\left( \frac{1}{p}-\frac{1}{q} \right)} \int_{0}^{\infty} h_{\alpha} (\theta) \theta^{-\frac{N}{2}\left( \frac{1}{p}-\frac{1}{q} \right)}d\theta \|u_0\|_{L^p},\\
\| S_{\alpha}(t) u_0\|_{L^{q}}&\le C t^{-\frac{\alpha N}{2}\left( \frac{1}{p}-\frac{1}{q} \right)} \int_{0}^{\infty} h_{\alpha} (\theta) \theta^{1-\frac{N}{2}\left( \frac{1}{p}-\frac{1}{q} \right)}d\theta \|u_0\|_{L^p}.
\end{aligned}
\end{equation}
We remark that we cannot choose the exponents $q\ge p$ arbitrarily owing to a limitation on the integrability of the weighted function (\ref{eq. int halpha}). This is in some ways an inconvenient property, but we fortunately observe that the time singularity of (\ref{eq. intro LpLq est for alpha}) is milder than that of (\ref{eq. intro LpLq est}). This relaxation enables us to construct the solution of (\ref{eq. intro alpha Fujita eq}) even in the doubly critical situation. For details, see the proof of \cite[Theorem~4.4]{ZhanSun15}, \cite[Theorem~5.1]{ZLS19}.

\begin{table}[h]\label{tab. usual and timefractional}
	\centering
	\begin{tabular}{ccc}
		\hline
		Equation & Global solution & Local solution for $u_0\in L^{1}(\mathbb{R}^N)$ \rule[0mm]{0mm}{5mm}\\
		\hline \hline
		(\ref{eq. intro Fujita eq}) &not exist& $\exists u_0\ge0$ admits no nonnegative sol. \rule[0mm]{0mm}{5mm}\\
		(\ref{eq. intro alpha Fujita eq}) &exist& always exists\rule[0mm]{0mm}{5mm}\\
		\hline
	\end{tabular}
\caption{Comparison of the usual heat equation and the time-fractional one}
\end{table}

As we noted above, in the critical situation, the time-fractional problem (\ref{eq. intro LpLq est for alpha}) exhibits different properties from those of (\ref{eq. intro Fujita eq}). However, if the Caputo derivative is a naturally expanded concept of the usual derivative, then (\ref{eq. intro Fujita eq}) and (\ref{eq. intro alpha Fujita eq}) must be connected in some ways when $\alpha\rightarrow 1$ formally. Therefore, the interest in the asymptotic behavior as $\alpha\rightarrow 1$ spontaneously arises. How does the \textit{solvability} of (\ref{eq. intro alpha Fujita eq}) with $\alpha<1$ fail when $\alpha\rightarrow 1$? In the present paper, we shall show the analogous result of (\ref{eq. BarasPierre}) and (\ref{eq. HisaIshige}) to the time-fractional Fujita equation (\ref{eq. intro alpha Fujita eq}). Using that, we discuss the behavior as $\alpha\rightarrow 1$ and formulate a collapse of the local and global solvability of (\ref{eq. intro alpha Fujita eq}) in the doubly critical situation.

\subsection{Main results}

Taking into account the mild-solution formula (\ref{eq. mild sol for linear eq}), we define a formulate of a solution for (\ref{eq. intro alpha Fujita eq}) in $(0,T)$.

\begin{defi}
	Let $u_0$ be a nonnegative function in $\mathbb{R}^N$ and $T\in (0,\infty]$. We say that a nonnegative measurable function $u$ in $(0,T)\times \mathbb{R}^N$ is a solution of (\ref{eq. intro alpha Fujita eq}) in $(0,T)$ if $u(t,\cdot)$ is integrable for each $t\in (0,T)$ and
	\[
	u(t,x) = P_{\alpha}(t)u_0 + \alpha \int_{0}^{t}(t-\tau)^{\alpha-1}S_{\alpha}(t-\tau)u^p(\tau,x)d\tau
	\]
	is satisfied.
\end{defi}

Firstly, we deduce the analogous results to (\ref{eq. BarasPierre}) and (\ref{eq. HisaIshige}) as follows:

\begin{theo}[Necessary condition to the initial data]\label{theo. main theorem}
	Suppose that (\ref{eq. intro alpha Fujita eq}) with $p=p_f$ has a nonnegative solution in $(0,T)$. Then, there exists a constant $\gamma(\alpha)$ satisfying $\limsup_{\alpha\rightarrow 1} \gamma(\alpha)<\infty$ such that for all $z\in \mathbb{R}^N$ and $0<\rho^{2/\alpha}<T$,
	\begin{equation}\label{eq. alpha ver. HisaIshige}
	\int_{B(z;\rho)} u_0(y)dy\le \gamma (\alpha)\left( \frac{T}{\rho^{2/\alpha}} \right)^{(1-\alpha)N/2} \left[ \log\left(\frac{T}{\rho^{2/\alpha}} \right) \right]^{-N/2}.
	\end{equation}
\end{theo}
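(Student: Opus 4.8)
The plan is to use positivity to convert the mild-solution identity into pointwise lower bounds and then to run a Baras--Pierre/Weissler-type iteration adapted to the subordination formula (\ref{eq. mild sol for linear eq}). Because $u\ge0$ and $P_\alpha(t),S_\alpha(t)$ preserve positivity, a solution satisfies both $u(t)\ge P_\alpha(t)u_0$ and $u(t)\ge\alpha\int_0^t(t-\tau)^{\alpha-1}S_\alpha(t-\tau)u(\tau)^p\,d\tau$. Setting $u_1:=P_\alpha(\cdot)u_0$ and $u_{n+1}:=u_1+\alpha\int_0^{\cdot}(\cdot-\tau)^{\alpha-1}S_\alpha(\cdot-\tau)u_n^p\,d\tau$, induction gives $u\ge u_n$ for all $n$; since a solution obeys $u(t,\cdot)\in L^1(\mathbb{R}^N)$ and is finite a.e., the increasing quantities $u_n(t,z)$ must stay bounded. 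I will prove the contrapositive of (\ref{eq. alpha ver. HisaIshige}): if $M:=\int_{B(z;\rho)}u_0\,dy$ exceeds the stated right-hand side, then $u_n(t,z)\to\infty$ for an admissible $t<T$, which is impossible.

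The driving estimates are kernel lower bounds obtained by freezing the subordination variable. Writing $P_\alpha(\tau)u_0(x)=\int_0^\infty h_\alpha(\theta)\,[e^{\tau^\alpha\theta\Delta}u_0](x)\,d\theta$, bounding the Gaussian from below by $(4\pi w)^{-N/2}e^{-|x-y|^2/w}$ and restricting the $y$-integral to $B(z;\rho)$, one obtains for $x$ near $z$ and $\tau\gtrsim\rho^{2/\alpha}$
\[
P_\alpha(\tau)u_0(x)\ \gtrsim\ M\,\tau^{-\alpha N/2}\int_{\rho^2/\tau^\alpha}^{\infty}h_\alpha(\theta)\,\theta^{-N/2}\,d\theta,
\]
and an analogous bound for $S_\alpha(s)$ acting on a function bounded below on a ball, now with the moment $\int_{\rho^2/s^\alpha}^{\infty}h_\alpha(\theta)\theta^{1-N/2}\,d\theta$. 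The truncated moments with cut-off $\theta_0=\rho^2/\tau^\alpha$ are the central new object: by (\ref{eq. int halpha}) the full moments diverge for $\nu\le-1$, and since $h_\alpha(\theta)\to1/\Gamma(1-\alpha)$ as $\theta\to0$, the truncation produces explicit powers of $\theta_0$ (logarithms when $N=2$). The doubly critical exponent $p=1+2/N$, i.e. $N(p-1)/2=1$, is exactly what lets the iteration close, for then $u_n(\tau)^p$ carries the scale-invariant weight $\tau^{-\alpha}$ and the natural variable becomes the logarithmic time $\eta=\log(\tau/\rho^{2/\alpha})\in[0,\log R]$ with $R=T/\rho^{2/\alpha}$.

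Propagating a rescaled profile through the recursion yields a nonlinear integral inequality in $\eta$, schematically $\phi_{n+1}(\eta)\ge c+A\,e^{-(1-\alpha)\eta}\int_0^{\eta}e^{(1-\alpha)\xi}\phi_n(\xi)^p\,d\xi$ with $A$ proportional to $M^{p-1}$; comparison with the associated Bernoulli relation $\phi'=A\phi^p-(1-\alpha)(\phi-c)$ bounds the blow-up abscissa on $[0,\log R]$, and finiteness of $u_n(t,z)$ becomes a quantitative ceiling on $A$, hence on $M$. In its cleanest form the necessary condition reads $M^{p-1}\log R\lesssim R^{1-\alpha}$, which is precisely (\ref{eq. alpha ver. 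HisaIshige}) after raising to the power $1/(p-1)=N/2$. The fractional prefactor $R^{(1-\alpha)N/2}$ is the imprint of the weight $(t-\tau)^{\alpha-1}$ together with the $\theta_0$-dependence of the moments; as $\alpha\to1$ this weight degenerates to the autonomous Baras--Pierre recursion and restores the pure logarithm. To get $\limsup_{\alpha\to1}\gamma(\alpha)<\infty$ I will keep all constants explicit in terms of $1/\Gamma(\alpha)$ and the truncated moments, and use that $\Gamma(\alpha)\to1$ and $h_\alpha\rightharpoonup\delta_1$, so these moments converge to their Gaussian values.

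The hard part is the bookkeeping forced by the absence of the semigroup property. The recursion is genuinely non-autonomous: its coefficients are the truncated moments $\int_{\rho^2/\tau^\alpha}^{\infty}h_\alpha(\theta)\theta^{\nu}\,d\theta$, and one cannot replace them by the global mass identity obtained from $\int S_\alpha(s)f=\Gamma(1+\alpha)^{-1}\int f$, which is too lossy and only yields $M\lesssim1$. Instead one must localize the subordination, separating the short-range contributions of small $\theta$ (which concentrate the mass near $z$ and feed the spatial spike) from the spreading contributions of $\theta\sim1$, and then integrate the resulting scale-dependent bounds against $(t-\tau)^{\alpha-1}$ so that the logarithm and the correcting power $R^{1-\alpha}$ emerge together. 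Making this quantitative while holding every constant uniform as $\alpha\to1$ is where the real work lies.
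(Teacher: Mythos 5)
Your overall strategy --- bootstrapping lower bounds from positivity of the mild-solution formula, exploiting the doubly critical scaling to generate a logarithm in the rescaled time $\eta=\log(\tau/\rho^{2/\alpha})$, and tracking the $\alpha$-dependence of every constant --- is the same as the paper's, which runs the Hisa--Ishige iteration. However, the proposal misdiagnoses where the difficulty lies and, by its own admission, defers the step that actually proves the theorem. The divergence of $\int_0^\infty h_\alpha(\theta)\theta^{-N/2}d\theta$ for $N\ge2$, which you treat as the central obstruction requiring truncated moments and a ``localization of the subordination,'' simply does not arise in the paper's argument: instead of evaluating $P_\alpha(\tau)u_0$ and $S_\alpha(s)f$ pointwise, one tests the mild-solution identity against $G(t^\alpha,x)$ and uses the semigroup identity $\int G(t^\alpha\theta,x+z-y)G(t^\alpha,x)\,dx=G\bigl(t^\alpha(1+\theta),z-y\bigr)$, so the relevant moments become $\int_0^\infty h_\alpha(\theta)(1+\theta)^{-N/2}d\theta$ and $\alpha\int_0^\infty\theta h_\alpha(\theta)(1+\theta)^{-N/2}d\theta$, which are finite and bounded below by $E_{\alpha,1}(-r)$ and $E_{\alpha,\alpha}(-r)$, quantities converging to $e^{-r}>0$ as $\alpha\to1$. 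That one observation delivers the uniformity in $\alpha$ that you identify as ``where the real work lies.'' By contrast, the extra powers of $\theta_0=\rho^2/\tau^\alpha$ you hope to extract come from the region $\theta\approx\theta_0$, where $h_\alpha(\theta)\approx1/\Gamma(1-\alpha)\to0$ as $\alpha\to1$; any gain harvested there degenerates in exactly the limit you must control.

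The second, more serious gap is that the step which closes the recursion is asserted rather than derived. To pass from $\alpha\int_0^t(t-\tau)^{\alpha-1}S_\alpha(t-\tau)u_n(\tau)^p\,d\tau$ to a power of the scalar quantity you iterate, you need either Jensen's inequality with respect to a probability kernel comparable to the one defining $\phi_n$, or a propagation of uniform lower bounds on balls; your sketch gestures at both but commits to neither, and the inequality $\phi_{n+1}(\eta)\ge c+A\,e^{-(1-\alpha)\eta}\int_0^\eta e^{(1-\alpha)\xi}\phi_n(\xi)^p\,d\xi$ is never established. This is not cosmetic: the sharp factor $(\log R)^{-N/2}$ survives the limit $k\to\infty$ only if the iteration constants satisfy $a_k\ge C^{p^k}$ with $C$ controlled uniformly (the paper checks this by estimating $b_k=-p^{-k}\log a_k$ and summing a telescoping series), and ball-propagation schemes with shrinking radii typically lose exactly this. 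The paper's device --- defining $w(t)=\int u(t,x+z)G(t^\alpha,x)\,dx$ and applying Jensen with respect to the probability density $G(\tau^\alpha,\cdot)$ after the elementary comparisons $t^\alpha+(t-\tau)^\alpha\theta\le\tau^\alpha(t/\tau)^\alpha(1+\theta)$ and $\exp(-|y-z|^2/(4t^\alpha+4(t-\tau)^\alpha\theta))\ge\exp(-|y-z|^2/4\tau^\alpha)$ --- is what makes both issues disappear simultaneously, and I would adopt it before attempting the Bernoulli-ODE comparison.
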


\begin{rem}
	{\rm
	Theorem~\ref{theo. main theorem} remains the possibility of the existence of a global-in-time solution. Indeed, if we admit arbitrarily large $T>0$, the right hand of (\ref{eq. alpha ver. HisaIshige}) goes to infinity, while that of (\ref{eq. HisaIshige}) goes to zero.
}
\end{rem}

\begin{rem}
	{\rm
	For (\ref{eq. intro Fujita eq}) with $p\neq p_{f}$, the analogous conditions to (\ref{eq. HisaIshige}) are the following (see \cite{HisaIshige18}):
	\[
	\begin{aligned}
	&\int_{B(z;T^{1/2})} u_0(y) dy \le \gamma T^{N/2 - 1/(p-1)}\ \ \mbox{if}\ 1<p<p_{f},\\
	&\int_{B(z;\sigma)}u_0(y)dy \le \gamma \sigma^{N-2/(p-1)}\ \ \mbox{for}\ 0<\sigma<T^{1/2}\ \mbox{if}\ p>p_{f}.
	\end{aligned}
	\]
	Similar estimates for the time-fractional equation (\ref{eq. intro alpha Fujita eq}) with $p\neq p_f$ were shown in \cite[Theorem~4.4]{ZhanSun15}. They used a weak solution method to deduce the estimates, while \cite{HisaIshige18} and we used a mild solution framework.	
}
\end{rem}

Secondly, we focus on the global-in-time solvability for (\ref{eq. intro alpha Fujita eq}) as $\alpha\rightarrow 1$. We denote by $\mathcal{G}_{\alpha}$ a set of all nonnegative integrable initial data which admit a global-in-time solution:
\[
\mathcal{G}_{\alpha}:= \left\{ 0\le v\in L^{1}(\mathbb{R}^N);\ \text{(\ref{eq. intro alpha Fujita eq}) with }u_0=v\text{ possesses a global-in-time solution.} \right\}.
\]
Theorem~\ref{theo. main theorem} directly deduces that $\mathcal{G}_{\alpha}$ is bounded in $L^{1}(\mathbb{R}^N)$. We infer from Table~\ref{tab. usual and timefractional} that $\mathcal{G}_{\alpha}$ \textit{tends to zero} as $\alpha\rightarrow 1$. Indeed, the following holds.

\begin{theo}[Shrinking of $\mathcal{G}_{\alpha}$]\label{cor. behavior of Galpha}
	We have
	\begin{equation}
	\sup_{v\in \mathcal{G}_{\alpha}}\| v\|_{L^1(\mathbb{R}^N)}\le C (1-\alpha)^{N/2}
	\end{equation}
	near $\alpha=1$. In particular,
	\[
	\lim_{\alpha\rightarrow 1}\sup_{v\in \mathcal{G}_{\alpha}}\| v\|_{L^1(\mathbb{R}^N)}=0.
	\]
\end{theo}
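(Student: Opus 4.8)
The plan is to feed Theorem~\ref{theo. main theorem} with an arbitrarily large time horizon and then optimize in the scaling parameter. Fix $v\in\mathcal{G}_{\alpha}$. By definition $v$ admits a global-in-time solution, so (\ref{eq. intro alpha Fujita eq}) with $u_0=v$ has a solution on $(0,T)$ for \emph{every} $T\in(0,\infty)$, and hence Theorem~\ref{theo. main theorem} is available for all $T>0$. The key observation is that the right-hand side of (\ref{eq. alpha ver. HisaIshige}) depends on $T$ and $\rho$ only through the ratio $s:=T\rho^{-2/\alpha}$. I would therefore freeze a value $s>1$, and for each $\rho>0$ take $T=s\rho^{2/\alpha}$, which automatically satisfies $0<\rho^{2/\alpha}<T$. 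Choosing $z=0$ in (\ref{eq. alpha ver. HisaIshige}) gives
\begin{equation*}
\int_{B(\rho)} v(y)\,dy \le \gamma(\alpha)\, s^{(1-\alpha)N/2}\bigl(\log s\bigr)^{-N/2},
\end{equation*}
a bound that is \emph{uniform in} $\rho$ because the right-hand side no longer contains $\rho$.

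Next I would let $\rho\to\infty$. Since $T=s\rho^{2/\alpha}\to\infty$ is always admissible for a global solution, the displayed inequality persists, and by the monotone convergence theorem its left-hand side increases to $\|v\|_{L^1(\mathbb{R}^N)}$. This yields
\begin{equation*}
\|v\|_{L^1(\mathbb{R}^N)}\le \gamma(\alpha)\, s^{(1-\alpha)N/2}\bigl(\log s\bigr)^{-N/2}\qquad\text{for every }s>1.
\end{equation*}
It then remains to minimize $f(s):=s^{(1-\alpha)N/2}(\log s)^{-N/2}$ over $s>1$. Differentiating $\log f(s)=\tfrac{(1-\alpha)N}{2}\log s-\tfrac{N}{2}\log\log s$ shows the unique interior critical point solves $\log s=1/(1-\alpha)$, i.e. $s_\ast=\exp\bigl(1/(1-\alpha)\bigr)$; since $f\to\infty$ as $s\to1^{+}$ and as $s\to\infty$, this is the global minimum. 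At $s_\ast$ one has $s_\ast^{(1-\alpha)N/2}=e^{N/2}$ and $(\log s_\ast)^{-N/2}=(1-\alpha)^{N/2}$, so $f(s_\ast)=e^{N/2}(1-\alpha)^{N/2}$. Plugging this in gives $\|v\|_{L^1(\mathbb{R}^N)}\le e^{N/2}\gamma(\alpha)(1-\alpha)^{N/2}$ for every $v\in\mathcal{G}_{\alpha}$, and taking the supremum together with $\limsup_{\alpha\to1}\gamma(\alpha)<\infty$ (so that $e^{N/2}\gamma(\alpha)\le C$ near $\alpha=1$) yields the claimed estimate; the limit statement is then immediate from $(1-\alpha)^{N/2}\to0$.

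The argument is short, so there is no single hard computation; the real content is the correct use of the available freedom. In contrast to the local-solvability/singular-data applications of such necessary conditions, where one sends $\rho\to0$, here the decisive move is to send $\rho\to\infty$ while holding the scale ratio $s$ fixed, which converts a ball estimate into a global $L^1$ bound without letting the right-hand side diverge (the divergence flagged in the Remark occurs only if $\rho$ is fixed and $T\to\infty$). The precise rate $(1-\alpha)^{N/2}$ is produced entirely by the one-variable optimization over $s$, and the uniformity of the final constant in $v$ and its boundedness as $\alpha\to1$ rest on the qualitative control $\limsup_{\alpha\to1}\gamma(\alpha)<\infty$ already built into Theorem~\ref{theo. main theorem}; I would make sure to invoke that property explicitly, since without it the constant could in principle blow up in the limit.
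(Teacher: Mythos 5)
Your proposal is correct and follows essentially the same route as the paper: both exploit that the right-hand side of (\ref{eq. alpha ver. HisaIshige}) depends only on the ratio $T/\rho^{2/\alpha}$, optimize that ratio at $e^{1/(1-\alpha)}$ to extract the factor $e^{N/2}(1-\alpha)^{N/2}$, and then send $\rho\to\infty$ to upgrade the ball estimate to the full $L^1$ norm. The only cosmetic difference is the order of operations (the paper fixes the optimal $T=e^{1/(1-\alpha)}\rho^{2/\alpha}$ first and then lets $\rho\to\infty$, whereas you pass to the limit in $\rho$ for general $s$ and minimize afterwards), which changes nothing of substance.
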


Finally, we discuss the local-in-time solvability of (\ref{eq. intro alpha Fujita eq}) in $L^{1}(\mathbb{R}^N)$. In \cite{Miya10}, it is shown that the following $L^{1}(\mathbb{R}^N)$ initial data
\begin{equation}\label{eq. intro Miyamoto init data}
\mu_{\epsilon}(x) :=|x|^{-N} \left( -\log |x| \right)^{-N/2-1+\epsilon} \chi_{B(1/e)} 
\end{equation}
with $0<\epsilon<N/2$ admits no nonnegative solutions of (\ref{eq. intro Fujita eq}) with $p=p_f$. Let $T_{\alpha}$ be the maximal existence time of (\ref{eq. intro alpha Fujita eq}):
\[
T_{\alpha}:= \sup \left\{ T\in (0,\infty);\ \text{there exists a nonnegative solution in }(0,T) \text{ with }p=p_f. \right\}.
\]
As we remarked (see Table~\ref{tab. usual and timefractional}), (\ref{eq. intro alpha Fujita eq}) is solvable even in the doubly critical situation, hence $T_{\alpha}>0$ for any $u_0\in L^{1}(\mathbb{R}^N)$.

\begin{theo}[Decay of the existence time]\label{cor. behavior of Talpha}
	For the initial data $u_0=\mu_{\epsilon}$, we have
	\begin{equation}\label{eq. estimate of Talpha}
	T_{\alpha}\le \exp \left( - \frac{2\epsilon(2-\alpha)}{N\alpha (1-\alpha)} \right).
	\end{equation}
	In particular, $T_{\alpha}\rightarrow 0$ as $\alpha \rightarrow 1$.
\end{theo}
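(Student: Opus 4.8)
The plan is to feed the explicit datum $\mu_\epsilon$ into the necessary condition of Theorem~\ref{theo. main theorem} and then choose the radius $\rho$ so cleverly that the inequality collapses to a transparent one–variable inequality whose failure pins down $T_\alpha$. First I would evaluate the left-hand side of \eqref{eq. alpha ver. HisaIshige} at $z=0$: passing to polar coordinates and substituting $s=-\log r$,
\[
\int_{B(\rho)}\mu_\epsilon(y)\,dy=\omega_{N-1}\int_{-\log\rho}^{\infty}s^{-N/2-1+\epsilon}\,ds=\frac{\omega_{N-1}}{N/2-\epsilon}\,(-\log\rho)^{-(N/2-\epsilon)}
\]
for $0<\rho<1/e$, where $\omega_{N-1}$ is the surface area of the unit sphere; the integral converges precisely because $\epsilon<N/2$. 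I abbreviate $m:=-\log\rho$ and $c_\epsilon:=\omega_{N-1}/(N/2-\epsilon)$.

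Next, assuming a nonnegative solution on $(0,T)$ with $T<1$, I would apply Theorem~\ref{theo. main theorem} with $z=0$ and the \emph{coupled} radius determined by $\log(T/\rho^{2/\alpha})=-\log\rho$, i.e.\ $\rho=T^{\alpha/(2-\alpha)}$; this is admissible because then $\rho^{2/\alpha}=T^{2/(2-\alpha)}<T$. The point of this coupling is that the factor $(-\log\rho)^{-N/2}$ coming from the datum and the factor $[\log(T/\rho^{2/\alpha})]^{-N/2}$ on the right-hand side of \eqref{eq. alpha ver. HisaIshige} cancel, so that the condition reduces to
\[
c_\epsilon\,m^{\epsilon}\le\gamma(\alpha)\,e^{(1-\alpha)Nm/2},\qquad m=\frac{\alpha}{2-\alpha}\log\tfrac1T .
\]
The ratio $c_\epsilon m^{\epsilon}/\big(\gamma(\alpha)e^{(1-\alpha)Nm/2}\big)$ is maximized at $m_\ast:=2\epsilon/(N(1-\alpha))$, where conveniently $e^{(1-\alpha)Nm_\ast/2}=e^{\epsilon}$, so at $m=m_\ast$ the condition becomes $c_\epsilon m_\ast^{\epsilon}\le\gamma(\alpha)e^{\epsilon}$. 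Tracing $m_\ast$ back through $m=\frac{\alpha}{2-\alpha}\log\frac1T$ and using the partial-fraction identity $\frac{2-\alpha}{\alpha(1-\alpha)}=\frac2\alpha+\frac1{1-\alpha}$ shows that the value $m=m_\ast$ corresponds exactly to $T=\exp\!\big(-\frac{2\epsilon(2-\alpha)}{N\alpha(1-\alpha)}\big)=:T^\ast$, which is the asserted bound.

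Finally I would close the argument by contradiction. Since $m_\ast=2\epsilon/(N(1-\alpha))\to\infty$ as $\alpha\to1$, while $\limsup_{\alpha\to1}\gamma(\alpha)<\infty$ by Theorem~\ref{theo. main theorem}, the reduced inequality $c_\epsilon m_\ast^{\epsilon}\le\gamma(\alpha)e^{\epsilon}$ must fail for all $\alpha$ sufficiently close to $1$ (there also $m_\ast>1$, so $\rho<1/e$ and the integral formula applies). Hence \eqref{eq. alpha ver. HisaIshige} cannot hold at $T=T^\ast$, so \eqref{eq. intro alpha Fujita eq} with $u_0=\mu_\epsilon$ admits no nonnegative solution on $(0,T^\ast)$; by the restriction property of mild solutions this yields $T_\alpha\le T^\ast$, and $T^\ast\to0$ gives $T_\alpha\to0$. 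The one genuinely delicate step is the choice of $\rho$: the coupling $\log(T/\rho^{2/\alpha})=-\log\rho$ is exactly what makes the two $N/2$-powers cancel and turns the doubly transcendental condition into the clean inequality above, and it is precisely the boundedness of $\gamma(\alpha)$ near $\alpha=1$, set against the blow-up of $m_\ast$, that converts ``solvability for every $\alpha<1$'' into the collapse $T_\alpha\to0$.
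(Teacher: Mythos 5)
Your proposal is correct, and its core is identical to the paper's: you integrate $\mu_{\epsilon}$ over $B(\rho)$ to get $c_{\epsilon}(-\log\rho)^{-N/2+\epsilon}$, and you couple $\rho$ to $T$ by $\rho^{2/\alpha-1}=T$ (equivalently $\rho=T^{\alpha/(2-\alpha)}$), which is exactly the paper's choice and makes the two $-N/2$ log-powers cancel, reducing Theorem~\ref{theo. main theorem} to the one-variable inequality $c_{\epsilon}m^{\epsilon}\le\gamma(\alpha)e^{(1-\alpha)Nm/2}$. Where you differ is the endgame. The paper proceeds in two stages: it first proves qualitatively, by contradiction along a sequence $\alpha_k\to1$, that $T_{\alpha}\to0$; only then, knowing $\sigma_{\alpha}=-\log\rho_{\alpha}\to\infty$, does it use the unimodality of $x\mapsto x^{\epsilon}e^{-(1-\alpha)Nx/2}$ (increasing up to $2\epsilon/(N(1-\alpha))$) to force $\sigma_{\alpha}\ge 2\epsilon/(N(1-\alpha))$ and hence the bound on $T(\alpha)=\min(1,T_{\alpha})$. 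You instead evaluate the reduced inequality directly at $m=m_{*}=2\epsilon/(N(1-\alpha))$, i.e.\ at $T=T^{*}$, where it reads $c_{\epsilon}m_{*}^{\epsilon}\le\gamma(\alpha)e^{\epsilon}$ and visibly fails for $\alpha$ near $1$ since $m_{*}\to\infty$ while $\limsup_{\alpha\to1}\gamma(\alpha)<\infty$; nonexistence on $(0,T^{*})$ then gives $T_{\alpha}\le T^{*}$ by restriction. This is a genuine streamlining: it dispenses with the paper's preliminary qualitative step and with the monotonicity analysis of $g$, and it cleanly sidesteps the minor issue of applying the necessary condition at the endpoint $T=T_{\alpha}$ itself. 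One small caveat, which applies equally to the paper's argument: both proofs establish \eqref{eq. estimate of Talpha} only for $\alpha$ sufficiently close to $1$ (you say so explicitly), which is all that is needed for the conclusion $T_{\alpha}\to0$.
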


\begin{rem}
	{\rm 
	Theorem~\ref{cor. behavior of Galpha} means that the global-in-time solvability of (\ref{eq. intro alpha Fujita eq}) for nontrivial initial data fails when $\alpha \rightarrow 1$. Furthermore, Theorem~\ref{cor. behavior of Talpha} suggests that the local-in-time solvability of (\ref{eq. intro alpha Fujita eq}) with $u_{0}=\mu_{\epsilon}$ fails when $\alpha \rightarrow 1$. Therefore, these theorems are regarded as the representation of collapse of the local and global-in-time solvability for (\ref{eq. intro alpha Fujita eq}) as $\alpha \rightarrow 1$.
}
\end{rem}

This paper is organized as follows. In Section~\ref{section. Proof of Thm1}, we prove Theorem~\ref{theo. main theorem}. For the proof, we employ the iteration method used in \cite{HisaIshige18}, but modify the way of calculation due to the property of the time-fractional derivative. In Section~\ref{Aspects}, we prove Theorem~\ref{cor. behavior of Galpha} and Theorem~\ref{cor. behavior of Talpha} by using Theorem~\ref{theo. main theorem}.

\section{Necessary condition to the initial data}\label{section. Proof of Thm1}

In this section, we prove Theorem~\ref{theo. main theorem}. We refer to the iteration method used in \cite{HisaIshige18}, which is based on \cite{GK99, Sugitani75, LN92}.

\begin{proof}[Proof of Theorem~\ref{theo. main theorem}]
	Let $0<\rho^{2/\alpha}<t<T$. We simply denote
	\[
	M:= \int_{B(z;\rho)} u_0(y)dy.
	\]
	By the definition,
	\[
	u(t,x+z) = \int_{\mathbb{R}^N}\int_{0}^{\infty} h_{\alpha}(\theta) G(t^{\alpha} \theta, x+z-y) u_0(y)dy + \alpha \int_{0}^{t} (t-\tau)^{\alpha-1} \left[ S_{\alpha}(t-\tau) u^p(\tau) \right](x+z) d\tau.
	\]
	Multiplying $G(t^{\alpha},x)$ by the both side and integrating with respect to $x$, we obtain
	\[
	\begin{aligned}
	\int_{\mathbb{R}^N}u(t,x+z)G(t^{\alpha},x)dx&= \int_{\mathbb{R}^N}\left[ \int_{0}^{\infty} h_{\alpha}(\theta) \left(\int_{\mathbb{R}^N} G(t^{\alpha}\theta, x+z-y) G(t^{\alpha},x) dx\right)d\theta \right] u_0(y) dy \\
	&\hspace{5ex}+ \alpha \int_{\mathbb{R}^N} G(t^{\alpha}, x) \left(\int_{0}^{t} (t-\tau)^{\alpha-1} \left[ S_{\alpha}(t-\tau) u^p(\tau) \right](x+z) d\tau \right)dx.
	\end{aligned}
	\]
	For the first term, by using the semigroup property of the heat kernel $G$, we deduce
	\[
	\begin{aligned}
	&\int_{\mathbb{R}^N}\left[ \int_{0}^{\infty} h_{\alpha}(\theta) \left(\int_{\mathbb{R}^N} G(t^{\alpha}\theta, x+z-y) G(t^{\alpha},x) dx\right)d\theta \right] u_0(y) dy\\
	&= \int_{\mathbb{R}^N}\left[ \int_{0}^{\infty} h_{\alpha}(\theta) G\left( t^{\alpha}(1+\theta), z-y \right)d\theta \right] u_0(y) dy\\
	&\ge C \left( \int_{0}^{\infty} h_{\alpha}(\theta) \left( 1+ \theta \right)^{-N/2} d\theta \right) t^{-\alpha N/2}\int_{B(z;\rho)} \exp\left( - \frac{|z-y|^{2}}{4t^{\alpha}} \right) u_0(y) dy\\
	&\ge C \left( \int_{0}^{\infty} h_{\alpha}(\theta) \left( 1+ \theta \right)^{-N/2} d\theta \right) t^{-\alpha N/2} M.
	\end{aligned}
	\]
	For the Duhamel term, similarly we use the semigroup property to get
	\[
	\begin{aligned}
	&\int_{\mathbb{R}^N} G(t^{\alpha}, x) \left(\int_{0}^{t} (t-\tau)^{\alpha-1} \left[ S_{\alpha}(t-\tau) u^p(\tau) \right](x+z) d\tau \right)dx\\
	&=  \int_{\mathbb{R}^N} G(t^{\alpha}, x) \left[\int_{0}^{t} (t-\tau)^{\alpha-1} \left\{ \int_{\mathbb{R}^N} \left( \int_{0}^{\infty}\theta h_{\alpha}(\theta) G((t-\tau)^{\alpha}\theta, x+z-y)d\theta \right) u^p(\tau,y) dy \right\} d\tau \right]dx\\
	&= \int_{0}^{t} (t-\tau)^{\alpha-1} \left[ \int_{\mathbb{R}^N} \left\{ \int_{0}^{\infty} \theta h_{\alpha}(\theta) \left( \int_{\mathbb{R}^N} G(t^{\alpha}, x) G((t-\tau)^{\alpha}\theta, x+z-y)dx \right) d\theta\right\} u^p(\tau,y) dy \right] d\tau\\
	&= \int_{0}^{t} (t-\tau)^{\alpha-1} \left\{ \int_{\mathbb{R}^N} \left( \int_{0}^{\infty} \theta h_{\alpha}(\theta) G\left( t^{\alpha}+(t-\tau)^{\alpha}\theta, y-z \right) d\theta \right) u^p(\tau,y)dy \right\} d\tau.
	\end{aligned}
	\]
	The following estimates
	\[
	\begin{aligned}
	&\exp\left( - \frac{|y-z|^{2}}{4t^{\alpha} + 4(t-\tau)^{\alpha}\theta} \right) \ge \exp\left( -\frac{|y-z|^{2}}{4\tau^{\alpha}} \right),\\
	&t^{\alpha}+(t-\tau)^{\alpha}\theta \le t^{\alpha}(1+\theta)= \tau^{\alpha} \left( \frac{t}{\tau} \right)^{\alpha}(1+\theta)
	\end{aligned}
	\]
	and Jensen's inequality provide
	\begin{equation}\label{eq. w(t)}
	\begin{aligned}
	w(t)&\ge C_1\left( \int_{0}^{\infty} h_{\alpha}(\theta) \left( 1+ \theta \right)^{-N/2} d\theta \right) t^{-\alpha N/2} M\\
	&\hspace{5ex} + C_2 \left(  \alpha \int_{0}^{\infty} \theta h_{\alpha}(\theta) (1+\theta)^{-N/2}d\theta \right) t^{\alpha-1} t^{-\alpha N/2} \int_{\rho^{2/\alpha}}^{t} \tau^{\alpha N/2} w^{p}(\tau)d\tau
	\end{aligned}
	\end{equation}
	where
	\[
	w(t):=\int_{\mathbb{R}^N}u(t,x+z)G(t^{\alpha},x)dx
	\]
	which is finite due to the integrability of $x\mapsto u(t,x+z)$. Since $e^{-r\theta} (1+\theta)^{N/2}$ is bounded in $[0,\infty)$ for arbitrarily fixed $r>0$, we use (\ref{eq. int halpha&exp}) to deduce
	\[
	\begin{aligned}
	&C\int_{0}^{\infty} h_{\alpha}(\theta) (1+\theta)^{-N/2} d\theta \ge \int_{0}^{\infty} h_{\alpha}(\theta) e^{-r\theta}d\theta = E_{\alpha,1}(-r)=: r_1(\alpha),\\
	&C\alpha \int_{0}^{\infty} \theta h_{\alpha}(\theta) (1+\theta)^{-N/2} d\theta \ge \alpha \int_{0}^{\infty} \theta h_{\alpha}(\theta) e^{-r\theta}d\theta = E_{\alpha,\alpha}(-r)=:r_2(\alpha).
	\end{aligned}
	\]
	Therefore, (\ref{eq. w(t)}) is rewritten as
	\begin{equation}\label{eq. estimate of w(t)}
		w(t)\ge C_1 r_1(\alpha) M t^{-\alpha N/2} + C_2 r_2(\alpha) t^{\alpha-1}t^{-\alpha N/2} \int_{\rho^{2/\alpha}}^{t} \tau^{\alpha N/2} w^{p}(\tau) d\tau.
	\end{equation}
	We apply the iteration argument in \cite{HisaIshige18} to (\ref{eq. estimate of w(t)}). Let
	\[
	a_1= C_1 r_1(\alpha),\ \ \ a_{k+1} =C_2 r_2(\alpha) a_k^{p} \frac{p-1}{p^{k}-1},\ \ k=1,2,\cdots
	\]
	and
	\[
	\begin{aligned}
	&q_1=0,\ \ \ q_{k+1}=p q_{k} +(\alpha-1)\le 0,\ \ k=1,2,\cdots\\
	&r_1=0,\ \ \ r_{k+1}=p r_{k} +(1-\alpha)\ge 0,\ \ k=1,2,\cdots.
	\end{aligned}
	\]
	We claim that for all $k=1,2,\cdots$,
	\begin{equation}\label{eq. estimate of w wrt k}
		w(t) \ge a_k M^{p^{k-1}} t^{q_k} \left( \rho^{2/\alpha} \right)^{r_k} t^{-\alpha N/2} \left[ \log \left( \frac{t}{\rho^{2/\alpha}} \right) \right]^{\frac{p^{k-1}-1}{p-1}}.
	\end{equation}
	For $k=1$, the estimate (\ref{eq. estimate of w wrt k}) is obviously true. Assume that (\ref{eq. estimate of w wrt k}) is true with $k$. Then, using (\ref{eq. estimate of w(t)}), we obtain
	\[
	\begin{aligned}
	w(t)&\ge C_2r_2(\alpha) t^{\alpha-1} t^{-\alpha N/2} \int_{\rho^{2/\alpha}}^{t} \tau^{\alpha N/2} a_{k}^{p} M^{p^k} \tau^{pq_k} \left( \rho^{2/\alpha} \right)^{pr_k} \tau^{-\alpha N p/2} \left[ \log \left( \frac{\tau}{\rho^{2/\alpha}} \right) \right]^{\frac{p\left(p^{k-1}-1\right)}{p-1}}d\tau\\
	&= a_{k+1} M^{p^k} t^{q_{k+1}} \left( \rho^{2/\alpha} \right)^{pr_{k}} t^{-\alpha N/2} \int_{\rho^{2/\alpha}}^{t} \tau^{-\alpha} \tau^{\alpha-1}\tau^{1-\alpha} \left[ \log \left( \frac{\tau}{\rho^{2/\alpha}} \right) \right]^{\frac{p\left(p^{k-1}-1\right)}{p-1}}d\tau\\
	&\ge  a_{k+1} M^{p^k} t^{q_{k+1}} \left( \rho^{2/\alpha} \right)^{r_{k+1}} t^{-\alpha N/2} \int_{\rho^{2/\alpha}}^{t} \tau^{-1} \left[ \log \left( \frac{\tau}{\rho^{2/\alpha}} \right) \right]^{\frac{p\left(p^{k-1}-1\right)}{p-1}}d\tau\\
	&=  a_{k+1} M^{p^k} t^{q_{k+1}} \left( \rho^{2/\alpha} \right)^{r_{k+1}} t^{-\alpha N/2} \left[ \log \left( \frac{\tau}{\rho^{2/\alpha}} \right) \right]^{\frac{p^{k}-1}{p-1}}.
	\end{aligned}
	\]
	Therefore, (\ref{eq. estimate of w wrt k}) is true for all $k$. Moreover, we say that there exist constants $C_3,\ C_4>0$ such that
	\begin{equation}\label{eq. estimate of ak}
		a_{k} \ge \left[ C_3 \left( \frac{r_1(\alpha)}{r_2(\alpha)} \right)^{C_4} \right]^{p^k}.
	\end{equation}
	Indeed, let $b_k:= -p^{-k}\log a_k$. Then,
	\[
	\begin{aligned}
	b_{k+1}-b_{k} &= p^{-k-1} \log \left( C_2 r_2(\alpha) \frac{p^k-1}{p-1} \right)\\
	&\le p^{-k-1} \left( \log \left( \frac{C_2r_2(\alpha)}{p-1} \right) +\log p^k \right)\\
	&\le p^{-k-1} \left( C k + \log C r_2(\alpha) \right).
	\end{aligned}
	\]
	Therefore,
	\[
	\begin{aligned}
	b_{k+1} &= b_1+ \sum_{j=1}^{k}\left( b_{j+1}-b_{j} \right)\\
	&\le C + C \log\left( \frac{r_2(\alpha)}{r_1(\alpha)} \right).
	\end{aligned}
	\]
	This estimate deduces (\ref{eq. estimate of ak}). Then, we obtain
	\[
	w(t) \ge \left[ C_3 \left( \frac{r_1(\alpha)}{r_2(\alpha)} \right)^{C_4} \right]^{p^k} M^{p^{k-1}} \left( t^{\alpha-1} \right)^{\frac{p^{k-1}}{p-1}} \left( \rho^{2(1-\alpha)/\alpha} \right)^{\frac{p^{k-1}}{p-1}} t^{-\alpha N/2} \left[ \log\left( \frac{t}{\rho^{2/\alpha}} \right) \right]^{\frac{p^{k-1}-1}{p-1}}.
	\]
	Since $w(t)<\infty$, it follows that
	\[
	M\le C \left( \frac{r_2(\alpha)}{r_1(\alpha)} \right)^{C_4/p} \left( \frac{t}{\rho^{2/\alpha}} \right)^{(1-\alpha)N/2} \left[ \log \left( \frac{t}{\rho^{2/\alpha}} \right) \right]^{-N/2}.
	\]
	Since $r_1(\alpha),\ r_2(\alpha)\rightarrow e^{-r}$ as $\alpha\rightarrow 1$, we obtain the desired result.
\end{proof}

\section{Behavior when $\alpha \rightarrow 1$}\label{Aspects}

In this section, we prove Theorem~\ref{cor. behavior of Galpha} and Theorem~\ref{cor. behavior of Talpha}.

\subsection{Shrinking of $\mathcal{G}_{\alpha}$}

Let us prove Theorem~\ref{cor. behavior of Galpha}. Let $\alpha\in (0,1)$ and $\rho$ be arbitrarily fixed. Let $f: (1,\infty)\ni x\mapsto x^{(1-\alpha)N/2} \left( \log x \right)^{-N/2}$. We see that $f$ decreases if $ 1<x\le e^{1/(1-\alpha)}$, while increases if $x\ge e^{1/(1-\alpha)}$. Therefore, $f$ takes the minimum value $f(e^{1/(1-\alpha)})=e^{N/2} (1-\alpha)^{N/2}$. For arbitrary $\rho>0$, we auxiliary take $T:=e^{1/(1-\alpha)}\rho^{2/\alpha}>\rho^{2/\alpha}$. Then, for all $v\in \mathcal{G}_{\alpha}$,
\[
\int_{B(\rho)} v(y)dy \le C(1-\alpha)^{N/2}. 
\]
Taking $\rho\rightarrow \infty$, we botain
\[
\| v\|_{L^1(\mathbb{R}^N)} \le C(1-\alpha)^{N/2}.
\]
Thus, we complete the proof.
\qed

\subsection{Decay of the existence time}
	Now we prove Theorem~\ref{cor. behavior of Talpha} in this subsection. Assume that there exist $T>0$ and a sequence $\{\alpha_k \}$ such that $\alpha_k\rightarrow 1$ as $k\rightarrow \infty$ and $T_{\alpha_k} \ge T$ for all $k$. For $\rho<1/e$, we calculate
	\[
	\int_{B(\rho)} \mu_{\epsilon}(y)dy=C \left[ \log\left( \frac{1}{\rho} \right) \right]^{-N/2+\epsilon}.
	\]
	Let us take $\rho<\min(1,T^{1/2})$. Then, for any $k$, we have $\rho^{2/\alpha_k}<\rho^{2}<T$. Hence,
	\[
	\left[ \log \left( \frac{1}{\rho} \right) \right]^{-N/2+\epsilon}\le C \gamma(\alpha_k) \left( \frac{T}{\rho^{2/\alpha_k}} \right)^{(1-\alpha_k)N/2} \left[ \log\left( \frac{T}{\rho^{2/\alpha_k}} \right) \right]^{-N/2}.
	\]
	Therefore, taking $k\rightarrow \infty$ yields that
	\[
	\left[ \log \left( \frac{1}{\rho}\right) \right]^{-N/2+\epsilon}\le C \left[ \log\left( \frac{T}{\rho^2} \right) \right]^{-N/2}=C\left[ \log T+ 2\log\left( \frac{1}{\rho} \right) \right]^{-N/2}
	\]
	holds for any $\rho\in (0,\rho_*)$ with $\rho_*$ is sufficiently small. Denoting $y:=\log\left(1/\rho \right)$ for simplicity, we obtain
	\[
	\left(\frac{\log T}{y} + 2 \right)^{N/2} y^{\epsilon}\le C.
	\]
	Since $y \rightarrow \infty$ as $\rho\rightarrow 0$, we deduce the contradiction. Let us prove the estimate (\ref{eq. estimate of Talpha}). Let $T(\alpha):= \min\left(1,T_{\alpha}\right)$. We take $\rho_{\alpha}^{2/\alpha-1}=T(\alpha)$ so that $T(\alpha)/\rho^{2/\alpha}_{\alpha} = 1/\rho_{\alpha}>1$. We see that
	\[
	\left( -\log\rho_{\alpha} \right)^{\epsilon} \le C \left( \frac{1}{\rho_{\alpha}} \right)^{(1-\alpha)N/2}\ \Leftrightarrow\ \rho_{\alpha}^{(1-\alpha)N/2} \left( -\log \rho_{\alpha} \right)^{\epsilon}\le C.
	\]
	Replacing $\sigma_{\alpha} :=-\log \rho_{\alpha}$, we deduce
	\[
	\sigma_{\alpha}^{\epsilon}\exp\left(-\frac{(1-\alpha)N\sigma_{\alpha}}{2}\right) \le C.
	\]
	Here, we remark that $\rho_{\alpha}\rightarrow 0$ implies $\sigma_{\alpha}\rightarrow \infty$ as $\alpha \rightarrow 1$. We consider the function $g: [0,\infty)\ni x\mapsto x^{\epsilon} \exp\left(-(1-\alpha)Nx/2\right) $. We see that $g$ increases if $0\le x \le \frac{2\epsilon}{N(1-\alpha)}$, while decreases if $x\ge \frac{2\epsilon}{N(1-\alpha)}$. Furthermore, for each point $x\in [0,\infty)$, $g$ is monotone increasing with respect to $\alpha$. Since $\sigma_{\alpha} \rightarrow \infty$, we conclude that $\frac{2\epsilon}{N(1-\alpha)}\le \sigma_{\alpha}$. Hence,
	\[
	\rho_{\alpha} \le \exp\left( -\frac{2\epsilon}{N(1-\alpha)} \right).
	\]
	Thus,
	\[
	T(\alpha) = \rho_{\alpha}^{2/\alpha-1} \le \exp\left( -\frac{2\epsilon(2-\alpha)}{N\alpha(1-\alpha)} \right).
	\]
\qed

\section*{Acknowledgment}
I would like to thank my instructor Michiaki Onodera, associate professor of the Department of Mathematics, School of Science, Tokyo Institute of Technology, for his helpful comments on this study.

\bibliographystyle{plain}
\bibliography{dcref.bib}

\end{document}